\author{Huda Alzaki and Jae-Cheon Joo} 
\address{Department of Mathematics, King Fahd University of Petroleum and Minerals, 31261 Dhahran, Saudi Arabia}
\email{g201901330@kfupm.edu.sa; jcjoo@kfupm.edu.sa}             %   for the title page.
\date{\today}                           %   Use current date. 
\newcommand{\re}{\mathrm{Re}\,}
\newcommand{\IM}{\mathrm{Im}\,}
\newcommand{\Aut}{\mathrm{Aut}}
\newcommand{\C}{\mathbb{C}}
\newcommand{\R}{\mathbb{R}}
\newcommand{\beq}{\begin{eqnarray*}}
\newcommand{\eeq}{\end{eqnarray*}}
\newcommand{\beg}{\begin{equation}}
\newcommand{\eeg}{\end{equation}}
\newcommand{\Om}{\Omega}
\newcommand{\di}{\partial}
\newcommand{\la}{\lambda}
\newcommand{\al}{\alpha}
\newcommand{\ov}{\overline}
\newcommand{\ii}{\sqrt{-1}}
\newcommand{\vp}{\varphi}
\newcommand{\wt}{\widetilde}
\newcommand{\mc}{\mathcal}
\newcommand{\rw}{\rightarrow}
\newtheorem{thm}{Theorem}[section]
\newtheorem{lem}[thm]{Lemma}
\newtheorem{prop}[thm]{Proposition}
\theoremstyle{definition}
\newtheorem{definition}[thm]{Definition}
\theoremstyle{remark}
\newtheorem{remark}[thm]{Remark}
\newtheorem{example}[thm]{Example}
\numberwithin{equation}{section}
\title[Indicatrix and Maximal Circularity]{Indicatrix of Invariant metrics, Maximal circularity and Scaling of domains}
\subjclass[2010]{32H02, 32M05}
\keywords{Invariant metrics, Indicatrix, Scaling, Circularity}
\begin{document}

\begin{abstract}
We investigate how to use the indicatrix of an invariant metric to rescale a sequence of biholomorphic maps and to ensure the convergence of the rescaled sequence. We also use the indicatrix of the Kobayashi-Royden metric to define the maximal circularity function which indicates how the domain is close to a circular domain.
\end{abstract}

\maketitle

\section{Introduction}
This paper consists of several observations on the celebrated work \cite{Fr} by S. Frankel which states that a convex hyperbolic domain in complex Euclidean space admitting a discrete and cocompact subgroup of automorphisms which acts freely is biholomorphic to a bounded symmetric domain. As a key ingredient, Frankel proved that if $\Om$ is a convex hyperbolic domain, then for any sequence $\{\vp_j\}$ of holomorphic automorphisms and any  $p\in \Om$, the {\em scaled} sequence
\beg\label{e;Fr1}
\tau_j = [d\vp_j (p)]^{-1} \circ (\vp_j(\cdot) - \vp_j(p)),
\eeg
has a convergent subsequence. In this paper, we investigate the sequence \eqref{e;Fr1} about the construction, the convergence and the symmetry of limits in a geometric point of view on invariant metrics.  

Roughly speaking, by the method of scaling we mean a way of renomalizing maps in a sequence by compositions with affine linear maps to make the new sequence subseuentially convergent. The linear map which is composed for the scalng is usually called the {\em stretching factor}. Although the idea of scaling is used in a wide range of mathematics, two specific constructions are usually cited when we deal with sequences of holomorphic automorphisms of a domain: Frankel's sequence \eqref{e;Fr1} and Pinchuk's one (\cite{Pin}). In contrast with \eqref{e;Fr1}, Pinchuk's construction of the stretching factor is more complicated and not completely known in general cases of domain. On the other hand, Pinchuk's construction has a merit that the stretching factor is completely determined by the geometry of domain, while the stretching factor of \eqref{e;Fr1} is $(d\vp_j(p))^{-1}$ and it depends on the map. The independence of stretching factor from the map in Pinchuk's constuction turned out to be quite useful in dealing with invariants of domain, and it has been asked if or when two constuctions are equivalent. There are some partial results about the question (\cite{SJoo, KK}), however it is not answered yet in general cases. Instead, we construct a new strecthing factor  which is fully detemined by the indicatrix of an invariant metric and prove that the scaled sequence with the new stretching factor is indeed equivalent with \eqref{e;Fr1} as the first result (Theorm \ref{t;main1}). 

The second observation is on a condition for the convergence of \eqref{e;Fr1}. As mentioned above, Frankel proved that the sequence is subsequentially convergent if $\Om$ is convex and hyperbolic by obtaining local uniform estimates of derivatives for automorphisms up to the second order. In view of Theorem \ref{t;main1}, it is also natural to describe a geometric condition on invariant metrics for the convergence. The condition is given by a relation between the indicatrix of an invariant metric and the ball of the induced distance. We call the condition the {\em uniform domination} of the domain by the given metric (Definition \ref{d;dom}). After observing the convergence of \eqref{e;Fr1} under the uniform domination condition, we will prove that any convex hyperbolic domain is uniformly dominated by the Kobayashi-Royden metric in Theorem \ref{t;ud} in order to recover the case of \cite{Fr}. 

The last result is about a symmetry of the image of a subsequential limit of \eqref{e;Fr1}. In \cite{Fr}, the author chooses a sequence of automorphisms which drags an interior point to a {\em regular} boundary point, and  proves the image of the limit (and hence the given domain as well) admits a $1$-parameter family of automorphisms. Let us recall that if $q$ is a regular boundary point of a bounded symmetric domain which is not biholomorphic to the ball, then there is a nontrivial analytic variety in the boundary which passes through $q$. On the other hand, a boundary point $q$ of a domain is called a {\em minimal} boundary point if the boundary of domain does not contain any nontrivial analytic variety passing through $q$. As seen in some examples of domain of piecewise smooth boundary  (\cite{Kim2, KKS, KP, Zim}), the image of the limit of the scaled sequence is usually more canonical in case the given sequence of automorphisms drags an interior point to a minimal boundary point rather than to a non-minimal boundary point. With this observation, it is natural to ask {\em what kind of symmetry the limit of \eqref{e;Fr1} has, in case $\Om$ is bounded and convex, and $\vp_j(p)$ tends to a minimal boundary point.} Another question is {\em where the circularity of the domain comes from.} Recall that the circularity is a signature property of bounded symmetric domains in the set of bounded homogeneous domains. We believe that these two questions are indeed same and can be combined to the following one.

{\em Question.} Let $\Om$ be a bounded convex domain in $\C^n$. Suppose that there exists a sequence $\{\vp_j\}$ of holomorphic automorphisms and $p\in \Om$ such that $\vp_j(p)$ tends to a minimal boundary point of $\Om$. Then is $\Om$  biholomorphic to a completely circular domain?

In order to study this problem in more geometric point of view, we define in Section 3 an invariant function $c_\Om$ on $\Om$ which we call the {\em maximal circularity}. We will see that $0< c_\Om(x)\leq 1$ for any $x\in \Om$ and $c_\Om(x) =1$ if and only if $\Om$ is biholomorphic to the indicatrix of the Kobayashi-Royden metric of $\Om$ at $x$. With this, we can ask a more general question.

{\em Question.} Let $\Om$ be a bounded convex domain in $\C^n$. Does $c_\Om (x)$ tend to $1$ as $x$ tends to a minimal boundary point?

We do not know yet whether or not the answers to the questions are all yes. We will only prove in Section 3 that the answer to the last question is affirmative in case $\Om$ is either strongly pseudoconvex (not necessarily convex) or a convex analytic polyhedral domain.

\bigskip

{\em Acknowledgement.} We would like to thank Professor Kang-Tae Kim for many inspiring discussions about the scaling theory.

\section{Construction and Convergence of a scaling sequence}

 Let $\Om$ be a domain in $\C^n$. By a {\em complex Finsler metric} or simply a {\em metric} on $\Om$, we mean an upper semicontinuous (usc) function $F$ on $T\Om = \Om\times \C^n$ such that 
\begin{itemize}
\item[(\romannumeral1)] $F(x; v) \geq 0$ for any $x\in \Om$ and $v\in\C^n$ and $F(x; v) = 0$ only if $v=0$, and 

\item[(\romannumeral2)] $F(x; cv) = |c| F(x; v)$ for any $(x,v) \in \Om\times \C^n$ and $c\in \C$.
\end{itemize}
The {\em indicatrix} of $F$ at $x\in \Om$ is the set 
$$I_{\Om, F}(x) := \{v\in \C^n : F(x;v) <1\}.$$ We say a metric on $\Om$ is {\em convex} if the indicatrix of $F$ is a convex domain in $\C^n$ at any point of $\Om$. 

\begin{example}
Obviously, any Hermitian metric on $\Om$ is a convex metric. In particular, the Bergman metric or the K\"ahler-Einstein metric are convex metrics which are invariant under biholomorphic maps. Another example of invariant metric on a domain is the Kobayashi-Royden metric which is defined by 
$$K_\Om (x; v) = \inf \{\la >0 : \exists f\in \mathcal{O}(\Delta, \Om),\,\, f(0) =x,\,\, \la f'(0) = v\},$$
where $\mathcal O (\Delta, \Om)$ represents the space of holomorphic mappings from the unit disc $\Delta$ into $\Om$. Indeed, $K_\Om$ is not necessarily a metric, since $K_\Om(x; v)$ may be $0$ for a nonzero vector $v$, however, it turns out that it is a metric for instance if $\Om$ is bounded. We call a domain (or more generally a complex manifold) a {\em hyperbolic domain} if $K_\Om$ is a metric. It turns out that $K_\Om$ is convex if $\Om$ is convex however it is not true for general hyperbolic domains. To make it convex, one can consider a new invariant metric $\widehat K_\Om$ whose indicatrix is the convex hull of the indicatrix of $K_\Om$ at each point. The metric is called the {\em Kobayashi-Buseman metric.} See for instance \cite{JP, Ko} for more details about invariant metrics.
\end{example}

Let $F$ be a metric on a domain $\Om$ and $x\in\Om$. By the upper semicontnuity, we can find $u_1 =u_1(x)\in \C^n$ with $|u_1|=1$ such that 
$$\mu_1(x) := F(x;u_1) = \max\{F(x; v): v\in\C^n,\,\,|v|=1\}.$$ 
Inductively, if we have chosen $u_1,\ldots, u_\al$ and $\mu_\beta = F (x; u_\beta)$ for $1\leq\beta\leq\al\leq n-1$, then a unit vector $u_{\al+1}$ and $\mu_{\al+1}$ are defined by 
$$\mu_{\al+1}(x) := F(x;u_{\al+1}) = \max\{F(x; v): v\in V_\al,\,\,|v|=1\},$$
where $V_\al$ is the orthogonal complement of $\{u_1\ldots,u_\al\}$ in $\C^n$.
Let $L_x$ be the linear transform on $\C^n$ defined by 
\beg\label{e;str}
L_x(u_\al) = \mu_\al(x) e_\al, \quad \al=1,\ldots,n,
\eeg
where $\{e_1,\ldots,e_n\}$ is the standard Euclidean basis for $\C^n$. We call $L_x$ a {\em stretching factor} of $\Om$ at $x$ induced by the metric $F$.

For a sequence $\vp_j\in \Aut(\Om)$, the group of holomorphic automorphisms of $\Om$, S. Frankel (\cite{Fr}) defined a sequence
\beg\label{e;Fr}
\tau_j = [d\vp_j (p)]^{-1} \circ (\vp_j(\cdot) - p_j),
\eeg
where $p_j = \vp_j(p)$ and $d\vp_j (p)$ represents the complex Jacobian matrix of $\vp_j$ at $p$. Frankel proved that the sequence $\{\tau_j\}$ has a convergent subsequence if $\Om$ is hyperbolic and convex. We prove that the linear transform $[d\vp_j (p)]^{-1}$ can be replaced by a stretching factor induced by an invariant metric.

\begin{thm}\label{t;main1}
Let $\Om$ be a domain in $\C^n$, $p\in \Om$ and $\{\vp_j\}$ be a sequence of holomorphic automorphisms. Then the sequence \eqref{e;Fr} converges subsequentially if and only if 
\beg\label{e;scal}
\sigma_j:= L_j \circ (\vp_j (\cdot) -p_j)
\eeg converges subsequentially, where $L_j = L_{p_j}$ is a stretching factor induced by any convex invariant metric.
\end{thm}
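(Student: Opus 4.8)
The plan is to reduce the equivalence to the assertion that a sequence of invertible linear maps stays in a fixed compact subset of $\GL(n,\C)$, after which the conclusion follows from a normal-families argument. Since $F$ is invariant under $\Aut(\Om)$ and each $\vp_j\in\Aut(\Om)$, we have $F(p_j;d\vp_j(p)v)=F(p;v)$ for all $v$, so the indicatrix of $F$ at $p_j$ equals $I_{\Om,F}(p_j)=d\vp_j(p)(I)$, where $I:=I_{\Om,F}(p)$. Putting $A_j:=L_j\circ d\vp_j(p)\in\GL(n,\C)$, it is immediate from \eqref{e;Fr} and \eqref{e;scal} that $\sigma_j=A_j\circ\tau_j$, and also that $A_j(I)=L_j\big(I_{\Om,F}(p_j)\big)=:D_j$. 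Hence, once we know that all $A_j$ lie in one compact subset $\mc K\subseteq\GL(n,\C)$, the equivalence follows: along any subsequence on which $\tau_j$ converges locally uniformly to some $\tau$, pass to a further subsequence with $A_j\rw A\in\mc K\subseteq\GL(n,\C)$; then $\sigma_j=A_j\circ\tau_j$ converges locally uniformly to $A\circ\tau$, and the converse is the same argument with $A_j^{-1}\rw A^{-1}$. So everything comes down to controlling $\{A_j\}$.

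First I would note that the indicatrix $I$ of a convex invariant metric is bounded: if it were not, then (being convex, circular, and a neighbourhood of $0$) its closure would contain a complex line through the origin, which would force $F(p;\cdot)$ to vanish on a nonzero vector, contradicting axiom (\romannumeral1). Thus $r_0B\subseteq I\subseteq R_0B$ for some $0<r_0\le R_0$, where $B$ denotes the Euclidean unit ball, and $F(p;\cdot)=\rho_I$ is a genuine complex norm. Because $A_j(I)=D_j$, it now suffices to sandwich $D_j$ between two balls of $j$-independent radii, $c_1B\subseteq D_j\subseteq c_2B$, since this yields $\|A_j\|\le c_2/r_0$ and $\|A_j^{-1}\|\le R_0/c_1$. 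The lower inclusion is easy: $L_j$ carries the boundary point $u_\al(p_j)/\mu_\al(p_j)\in\di I_{\Om,F}(p_j)$ to $e_\al$, so $e_1,\dots,e_n\in\di D_j$; since $D_j$ is convex and balanced it contains the open $\ell^1$-ball $\{w:\sum_\al|w_\al|<1\}$, hence $(1/\sqrt n)B\subseteq D_j$.

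The hard part will be the uniform upper bound $D_j\subseteq c_2B$. The idea is to exploit the way $L_j$ ``diagonalizes'' the indicatrix along the greedy frame: writing $L_j=\Sigma_j U_j$ with $U_j$ the unitary map sending the greedy frame $u_\al=u_\al(p_j)$ to the standard basis and $\Sigma_j=\mr{diag}(\mu_1(p_j),\dots,\mu_n(p_j))$, one has $D_j=\Sigma_j(G_j)$, where $G_j:=U_j\big(I_{\Om,F}(p_j)\big)$ is convex and balanced, has greedy frame $(e_\al)$, and whose Minkowski functional $\rho:=\rho_{G_j}$ satisfies $\rho(e_\al)=\mu_\al:=\mu_\al(p_j)$. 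The key estimate to prove is the ``staircase'' inequality
\[
\mu_\al\,|z_\al|\ \le\ \rho(z)+\sum_{\be<\al}\mu_\be\,|z_\be|\qquad(z\in\C^n,\ 1\le\al\le n),
\]
from which an immediate induction gives $\mu_\al|z_\al|\le 2^{\al-1}\rho(z)$; applied to $w\in D_j$ (where $\rho(\Sigma_j^{-1}w)<1$ and $\mu_\al(\Sigma_j^{-1}w)_\al=w_\al$) this forces $|w_\al|<2^{\al-1}$, so $D_j\subseteq 2^nB$ and we are done. To obtain the staircase inequality, fix $\al$, set $V:=\{z_1=\dots=z_{\al-1}=0\}$, and observe that the greedy construction gives $\rho\le\mu_\al|\cdot|$ on $V$, with equality at $e_\al$; hence the section $G_j\cap V$ contains the $V$-ball of radius $1/\mu_\al$ and has $e_\al/\mu_\al$ on its boundary, so $e_\al/\mu_\al$ is a nearest boundary point of $G_j\cap V$ to the origin within $V$. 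The classical fact that a nearest boundary point admits a perpendicular supporting hyperplane gives $\re\langle z,e_\al\rangle\le 1/\mu_\al$ on $\ov{G_j\cap V}$, and circularity of $G_j\cap V$ upgrades this to $|z_\al|\le 1/\mu_\al$, i.e.\ $\mu_\al|z_\al|\le\rho(z)$ for $z\in V$; for general $z$ one applies this to $z-\sum_{\be<\al}z_\be e_\be\in V$ and uses the triangle inequality together with $\rho(e_\be)=\mu_\be$. This last point --- the staircase inequality and its telescoping --- is where essentially all the difficulty lies; the reduction of the theorem to the compactness of $\{A_j\}$ and the limiting argument are formal.
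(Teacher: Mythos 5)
Your proposal is correct, and its skeleton coincides with the paper's: both arguments reduce the theorem to showing that $A_j := L_j\circ d\vp_j(p)$ and its inverse are bounded uniformly in $j$, both get the bound on $A_j^{-1}$ cheaply from the fact that the preimages of the $e_\al$ lie on the (bounded) boundary of a fixed indicatrix, and both rest on the same key geometric fact, namely that in the greedy frame the indicatrix $I_{\Om,F}(p_j)$ is contained in the polydisc of polyradii $2^{\al-1}/\mu^j_\al$. Your ``staircase'' inequality is essentially a reformulation of the paper's Lemma~\ref{l;box}: the paper works with the real slice $R_j$ and iterates over sections $R_{j,1}, R_{j,2},\dots$ of increasing dimension, while you work directly with the Minkowski functional of the full complex section $G_j\cap V$; but the underlying geometry (a nearest boundary point admits a perpendicular supporting hyperplane, combined with the max property of the greedy frame and circularity) and the resulting doubling constants $2^{\al-1}$ are identical, and your telescoping/induction mirrors the paper's iteration. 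Where you genuinely diverge is the last step. The paper converts the polydisc containment into a volume bound $\mathrm{Vol}(I_{\Om,F}(p_j))\le C\prod_\al(\mu^j_\al)^{-2}$, combines it with the identity $|\det d\vp_j(p)|^2=\mathrm{Vol}(I_{\Om,F}(p_j))/\mathrm{Vol}(I_{\Om,F}(p))$ to bound $|\det A_j|$, and then recovers the operator bound on $A_j$ from the bounds on $A_j^{-1}$ and $\det A_j$ via Cramer's formula. You instead read off $\|A_j\|\le 2^n/r_0$ directly from $A_j(I)=D_j\subseteq 2^nB$ together with $r_0B\subseteq I$, which bypasses the volume/determinant/Cramer detour entirely and makes explicit where the boundedness of the convex indicatrix enters (the paper uses this boundedness implicitly as well, in the finiteness of $M$ and of $\mathrm{Vol}(I_{\Om,F}(p))$). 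Both routes are complete; yours is somewhat more economical in that final passage, at the cost of having to note separately that $I$ is sandwiched between Euclidean balls.
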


\begin{proof}
Let $F$ be a convex invariant metric on $\Om$. Suppose that $u_\al^j$ and $\mu_\al^j$ were chosen to construct $L_j$ by \eqref{e;str} at $p_j = \vp_j(p)$ and let
$$\tilde u^j_\al = d\vp_j(p)^{-1}(u^j_\al),\quad \al= 1,\ldots,n.$$
Since $\vp_j$ preserves $F$, we see that $F(p; \tilde u^j_\al) = \mu^j_\al$. Therefore,
the vectors $\tilde u^j_\al/\mu^j_\al \in \di I_{\Om,F} (p),$ where
$$I_{\Om,F}(q) = \{v\in\C^n : F(q; v)< 1\},$$ the indicatrix of $F$ at $q\in\Om$. Notice that the indicatrix is always convex for any $q\in \Om$. Let $A_j$ be the linear map on $\C^n$ defined by 
$$A_j \left(\frac{\tilde u^j_\al}{\mu^j_\al}\right) = e_\al,\quad \al=1,\ldots,n.$$
Then 
$$L_j = A_j\circ d\vp_j(p)^{-1}$$
by definition. Therefore, we only need to show that 
\beg\label{e;est}
C_1 |v| \leq |A_j v| \leq C_2 |v|,\quad \forall v\in\C^n
\eeg for some constants $C_1>0$ and $C_2>0$ which are independent of $j$, in order to show the equivalence of the subsequential convergence of $\{\sigma_j\}$ and $\{\tau_j\}$. 

 The lower estimate is equivalent to the upper bound of $A_j^{-1}$. This is obvious since
$$|A_j^{-1}v| = \left|A_j^{-1}\left(\sum_\al a_\al e_\al\right)\right|= \left|\sum_\al a_\al \left(\frac{\tilde u^j_\al}{\mu^j_\al}\right) \right|\leq nM |v|$$
for any $v = \sum_\al a_\al e_\al\in\C^n$, where $M = \sup\{|w| : w\in \di I_{\Om,F}(p)\}$. 

This lower estimate and Cramer's formula of inverse transform also implies that the upper estimate in \eqref{e;est} follows from an estimate of the determinant of $A_j$, that is, the upper estimate is obtained if we prove 
\beg\label{e;det}
|\det A_j| \leq C
\eeg for some constant $C>0$ independent of $j$. The definition of $L_j$ implies 
\beg\label{e;Lj}
|\det L_j| = \prod_\al\mu^j_\al.
\eeg Let $\det_\R$ be the determinant of real linear operator on $\R^{2n}=\C^n$. Then 
\beg\label{e;dvp}
{\det}_\R d\vp_j(p) = |\det d\vp_j(p)|^2 = \frac{\mathrm{Vol}(I_{\Om,F}(p_j))}{\mathrm{Vol}(I_{\Om,F}(p))} = C\,\mathrm{Vol}(I_{\Om,F}(p_j))
\eeg
where $\mathrm{Vol}(E)$ represents the Euclidean volume of a given region $E$ and $C^{-1}=\mathrm{Vol}(I_{\Om,F}(p))$, since $d\vp_j(p)$ maps $I_\Om(p)$ onto $I_\Om(p_j)$. Identifying the vector $\sum_\al a_\al u^j_\al$ with the $n$-tuple $(a_1,\ldots,a_n)$, we regard $I_{\Om,F}(p_j)$ as a balanced convex set in $\C^n$. Regarding $\C^n = \R^n + \sqrt{-1}\R^n$, let
$$R_j =I_{\Om,F}(p_j) \cap \R^n =  \{(a_1,\ldots,a_n)\in I_{\Om,F}(p_j) : a_\al\mbox{'s are real} \}.$$
Since $I_{\Om,F}(p_j)$ is completely circular, it is a circular region generated by $R_j$ and $R_j$ is symmetric with respect to the origin. Notice that  $R_j$ is also a convex region in $\R^n$ since $I_{\Om,F}(p_j)$ is convex. In order to prove the estimate \eqref{e;det}, we need the following lemma.

\begin{lem}\label{l;box} Let $R$ be a bounded convex domain in $\R^n$ which is symmetric with respect to the origin. Let 
$$r_1 = \min\{|x| : x\in \di R\}.$$
Suppose that $(r_1,0,\ldots, 0)\in \di R$ (this means the minimum distance to the boundary is attained at the $x_1$-intercept of $\di R$). Let 
$$\wt R = \{(x_2,\ldots, x_n) \in\R^{n-1} : (0, x_2,\ldots, x_n) \in R\}.$$ Suppose 
$$\wt R \subset \wt B :=[-r_2, r_2]\times\cdots\times[-r_n, r_n]$$ for some $r_2,\ldots, r_n >0$. Then 
$$R \subset B :=[-r_1, r_1]\times 2\wt B = [-r_1, r_1]\times[-2r_2, 2r_2]\times\cdots\times[-2r_n, 2r_n].$$
\end{lem}
\begin{proof}[Proof of Lemma \ref{l;box}]
We notice first that the supporting hyperplanes at $(\pm r_1,0,\ldots,0)$ are 
$$x_1 = \pm r_1,$$
since $r_1$ is the minimum distance from the origin to $\di R$. Therefore
$$x=(x_1,\ldots,x_n)\in R \Longrightarrow |x_1| \leq r_1.$$
We also observe that we may assume $\wt R = \wt B$. Otherwise, we only need to replace $R$ by the convex hull of $\wt B \cup R$. Since $\di \wt R = \di \wt B$ contains boxes $\{\pm r_2\}\times[-r_3, r_3]\times\cdots\times[-r_n, r_n]$, the supporting hyperplanes of $R$ at $(0, \pm r_2, 0,\ldots, 0)\in \di R$ also contains those boxes. This means if 
$$x_2 = \al_1 x_1 + \al_3 x_3 + \cdots + \al_n x_n \pm r_2$$ 
are the equation of the supporting hyperplanes, then $\al_3=\cdots=\al_n=0.$ Then from the condition that the distance from the origin to the supporting hyperplanes are greater than equal to $r_1$, we have 
$$\frac{r_2}{\sqrt{\al_1^2 + 1}} \geq r_1.$$ 
Thereore, 
$$|\al_1| \leq \sqrt{\left(\frac{r_2}{r_1}\right)^2 -1}.$$
Since $x=(x_1,\ldots,x_n)$ lies between two support hyperplanes $x_2 = \al_1 x_1 \pm r_2$ and $|x_1|\leq r_1$ if $x\in R$, we conclude that 
$$|x_2|\leq \sqrt{r_2^2 - r_1^2} + r_2 \leq 2r_2.$$ Similarly, we can also show that $|x_\al| \leq 2r_\al$ for any $\al=2,\ldots,n$ whenever $x=(x_1,\ldots,x_n) \in R$. This yields the conclusion.
\end{proof}
We resume the proof of Theorem \ref{t;main1}. Let 
$$R_{j,1} = \{(x_{n-1}, x_n) \in \R^2 : (0,\ldots,0, x_{n-1},x_n)\in R_j\}.$$ 
Applying Lemma \ref{l;box}, we see that 
$$R_{j,1} \subset B_{j,1} := [-1/\mu^j_{n-1}, 1/\mu^j_{n-1}]\times [-2/\mu^j_n, 2/\mu^j_n],$$ since
$\wt R_{j,1} = [-1/\mu^j_n, 1/\mu^j_n].$ Applying the lemma to 
$$R_{j,2} = \{(x_{n-2}, x_{n-1}, x_n) \in \R^3 : (0,\ldots,0,x_{n-2}, x_{n-1},x_n)\in R_j\},$$ we see that 
$$R_{j,1} \subset B_{j,2}: = [-1/\mu^j_{n-2}, 1/\mu^j_{n-2}]\times [-2/\mu^j_{n-1}, 2/\mu^j_{n-1}]\times[-2^2/\mu^j_n, 2^2/\mu^j_n],$$ since $\wt R_{j,2} = R_{j,1}\subset B_{j,1}.$ Repeating this procedure, we conclude that 
$$R_j \subset B_j:= \prod_{\al=1}^n [-2^{\al-1}/\mu^j_\al, 2^{\al-1}/\mu^j_\al].$$ Therefore, the indicatrix $I_{\Om,F}(p_j)$ is a subset of the circular region in $\C^n$ generated by $B_j$ which is the polydisc 
$$P_j := \{z=(z_1,\ldots,z_n)\in \C^n: |z_\al|\leq 2^{\al-1}/\mu^j_\al, \,\,\al=1,\ldots, n\}.$$ This implies that 
$$\mathrm{Vol}(I_{\Om,F}(p_j))\leq \mathrm{Vol}(P_j)\leq C \prod_\al \left(\frac{1}{\mu^j_\al} \right)^2$$ for some constant $C>0$ independent of $j$. Then \eqref{e;Lj} and \eqref{e;dvp} yields the desired estimate \eqref{e;det}.
\end{proof}

We will now investigate a condition of metric under that the Frankel sequence \eqref{e;Fr} has a convergent subsequence. Let $F$ be a metric on a domain $\Om$. We denote by $d_{\Om, F}$ the induced distance function, that is, 
$$d_{\Om, F} (x,y) = \inf\left\{\int_0^1 F(c(t); c'(t))\,dt : c\in C(x,y)\right\}$$
where $C(x,y)$ denotes the space of all piecewise differentiable (real) curve in $\Om$ joining $x$ and $y$. For $r>0$, we denote the $d_{\Om,F}$-ball of radius $r$ by $B_{\Om, F}(x;r)$, that is,
$$B_{\Om, F} (x; r)  = \{y\in \Om : d_{\Om, F}(x,y) <r\}.$$

\begin{definition}\label{d;dom}
Let $\Om$ be a domain in $\C^n$. We say that $\Om$ is {\em uniformly dominated } by a metric $F$ on $\Om$ if 
\begin{itemize}
\item $F$ is complete (i.e. $d_{\Om, F}$ is Cauchy complete),

\item For each $r>0$, there is $\la(r) >0$ such that 
\beg\label{e;dom}
B_{\Om, F}(x;r) -x \subset \la(r) I_{\Om, F}(x)
\eeg
for any $x\in \Om$, where $B_{\Om, F}(x; r) -x = \{y-x : y\in B_{\Om, F}(x; r)\}$, the Euclidean translation.
\end{itemize}
\end{definition}

\begin{prop}\label{p;normal}
Let $\Om$ be a domain and $p\in\Om$ fixed. Assume that $\Om$ is uniformly dominated by an invariant metric $F$. Then 
$$\mathcal S = \{\tau_\vp:\Om\rw \C^n : \vp\in \mathrm{Aut}(\Om)\}$$ is a normal family, where
$\tau_\vp = [d\vp (p)]^{-1} \circ (\vp(\cdot) - \vp(p))$,
\end{prop}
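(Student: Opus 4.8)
The plan is to show that $\mc S$ is uniformly bounded on every compact subset of $\Om$ and then to invoke Montel's theorem for the $\C^n$-valued holomorphic maps $\tau_\vp$ (equivalently, apply the classical Montel theorem to each component function). So fix a compact set $K\subset\Om$; the goal is a constant $M_K$, independent of $\vp\in\Aut(\Om)$, with $|\tau_\vp(z)|\le M_K$ for every $z\in K$. The first step is to trap $K$ inside a $d_{\Om,F}$-ball centered at $p$: since $F$ is upper semicontinuous it is bounded above on every set $\ov U\times\{v:|v|=1\}$ with $\ov U$ a compact subset of $\Om$, and estimating $d_{\Om,F}(p,\cdot)$ from above by the length of a fixed path from $p$ to a point near $y$ followed by a short Euclidean segment shows that $d_{\Om,F}(p,\cdot)$ is locally bounded; hence it is bounded on the compact set $K$, so $K\subset B_{\Om,F}(p;r)$ for some $r=r(K)>0$.

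Next I would use two elementary consequences of the invariance of $F$. First, every $\vp\in\Aut(\Om)$ is an isometry of $d_{\Om,F}$, because $F(\vp(x);d\vp(x)v)=F(x;v)$ forces the $F$-length of any curve to be preserved under $\vp$. Second, for the same reason $d\vp(p)$ maps $I_{\Om,F}(p)$ onto $I_{\Om,F}(\vp(p))$, and therefore $[d\vp(p)]^{-1}$ maps $I_{\Om,F}(\vp(p))$ back onto $I_{\Om,F}(p)$. Combining these with uniform domination: for $z\in K$ we have $d_{\Om,F}(\vp(p),\vp(z))=d_{\Om,F}(p,z)<r$, so $\vp(z)\in B_{\Om,F}(\vp(p);r)$; the domination inequality \eqref{e;dom}, applied at the base point $\vp(p)$ with the single function $\la=\la(r)$ that works for every base point, gives $\vp(z)-\vp(p)\in\la(r)\,I_{\Om,F}(\vp(p))$; and applying $[d\vp(p)]^{-1}$ yields
\[
\tau_\vp(z)=[d\vp(p)]^{-1}\bigl(\vp(z)-\vp(p)\bigr)\in\la(r)\,I_{\Om,F}(p).
\]
Since the indicatrix $I_{\Om,F}(p)$ at the fixed point $p$ is bounded — write $M:=\sup\{|w|:w\in I_{\Om,F}(p)\}<\infty$, the same finiteness already used in the proof of Theorem~\ref{t;main1} — we obtain $|\tau_\vp(z)|\le\la(r)M=:M_K$ for all $z\in K$ and all $\vp\in\Aut(\Om)$, which is precisely the required locally uniform bound.

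I do not anticipate a real obstacle: the definition of uniform domination is engineered exactly so that, after transporting by the isometry $\vp$ and renormalizing by $[d\vp(p)]^{-1}$, the image $\tau_\vp(z)$ of a ball of fixed radius lands inside a fixed dilate of the \emph{fixed} indicatrix $I_{\Om,F}(p)$, whose boundedness then closes the argument. The only points deserving a line of care are the two structural remarks above (that $\vp$ is a $d_{\Om,F}$-isometry and that $d\vp(p)$ transports the indicatrix, both immediate from invariance of $F$) and the fact that compact sets sit inside $d_{\Om,F}$-balls, which uses only upper semicontinuity of $F$; note that completeness of $F$ plays no role in the normality itself, although it is what will later allow one to control the limits of the $\tau_\vp$.
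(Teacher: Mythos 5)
Your proof is correct and follows essentially the same route as the paper: transport a $d_{\Om,F}$-ball by the isometry $\vp$, apply the domination inclusion at the base point $\vp(p)$, and pull back by $[d\vp(p)]^{-1}$ onto the fixed bounded indicatrix $I_{\Om,F}(p)$ before invoking Montel. The only (harmless) divergence is that you justify trapping compact sets inside $d_{\Om,F}$-balls via local boundedness of the distance, where the paper instead appeals to completeness of $F$ at that step.
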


\begin{proof}
For any $r>0$ and $\vp\in\Aut (\Om)$, 
\beq \tau_\vp(B_{\Om, F}(p; r)) &=& [d\vp (p)]^{-1}(B_{\Om, F}(\vp(p); r)-\vp(p))\\
&\subset&\la(r)[d\vp (p)]^{-1}( I_{\Om, F}(\vp (p)) )\\
&=&\la(r) I_{\Om, F}(p)
\eeq
by \eqref{e;dom}, since $F$ is invariant. Therefore, the family $\mathcal S$ is uniformly bounded on $B_{\Om, F}(p; r)$. Since $F$ is complete, this implies that $\mathcal S$ is uniformly bounded on each compact subset of $\Om$. The Montel theorem yields the conclusion.
\end{proof}

\begin{example}\label{ex;hp}
Let $\mc H = \{z\in \C : \IM z >0\}$ the upper half plane. Let $F$ be the Poincar\'e metric on $\mc H$ which coincides with the Kobayashi-Royden metric $K_{\mc H}$. The explicit formulas of the metric and the distance are as follows.
\beg\label{e;Poi}
F(z;v) = \frac{|v|}{2\IM z},\quad d_{\mc H, F}(z_1, z_2) = 2 \tanh^{-1}\left|\frac{z_2-z_1}{z_2-\bar z_1}\right|.
\eeg
We will show \eqref{e;dom} for a suitable $\la$ at every $x= a+\ii b.$ By the translation invariance of $F$, we assume without loss of generality $x = \ii b$ for $b>0$. Therefore, the indicatrix $I_{\mc H,F}(x)$ is the disc $\Delta(2b) = \{\zeta\in\C: |\zeta| <2b\}$ and the distance ball $B_{\mc H, F}(x;r)$ is the disc centered at $\ii b(r)$ of radius $\sqrt{b(r)^2 -b^2}$ where 
$$b(r) = \left(\frac{1+\tanh^2(r/2)}{1-\tanh^2 (r/2)}\right) b.$$ Therefore, if we define $\la_{\mc H}(r)$ by 
\beg\label{e;la}
\la_{\mc H}(r) = (2b)^{-1} \left(b(r) -b + \sqrt{b(r)^2 -b^2}\right) = \frac{\tanh(r/2)}{1-\tanh(r/2)},
\eeg
then, we see that $B_{\mc H,F}(x;r) - x\subset \la_{\mc H}(r) I_{\mc H, F}(x)$ for any $x=\ii b\in \mc H$. Since $b$ is arbitrary, this implies that $\mc H$ is uniformly dominated by the Poincar\'e metric.
\end{example}

\begin{remark}\label{r;dom}
It may be necessary to point out that the uniform domination condition of a metric is not holomorphic invariant. More precisely, even if $\Om$ is uniformly dominated by a metric $F$ and $\vp:\widehat \Om \rw \Om$ is a biholomorphic map, $\widehat\Om$ is not necessarily uniformly dominated by $\vp^*F$. The reason is that the definition of the uniform domination depends on a Euclidean translation of sets, which is not preserved by biholomorphic maps in general. Therefore, the uniform domination condition is just invariant under affine transforms. This is not an unexpected result, since the convergence of Frankel's scaling sequence is not a holomorphic invariant property, either.  It is also an affine invariant one.
\end{remark}

It would be an interesting question to ask about conditions of invariant metrics and domains which imply the uniform domination. The following theorem is an answer to this question in case of the Kobayshi-Royden metric. Together with Proposition \ref{p;normal}, it provides a geometric proof of the convergence of Frankel's scaling sequence on convex domains.

In order to simplify symbols, we denote the indicatrix and the distance ball by $I_\Om(x)$ and $B_\Om(x;r)$, respectively, if the metric is the Kobayashi-Royden metric. 

\begin{thm}\label{t;ud}
Let $\Om$ be a hyperbolic convex domain in $\C^n$. Then $\Om$ is uniformly dominated by the Kobayashi-Royden metric $K_\Om$. Indeed, 
$$B_\Om (x;r)-x \subset2 \la_{\mc H}(r) I_\Om (x)$$
for each $r>0$ and $x\in \Om$, where $\la_{\mc H}$ is the function defined as \eqref{e;la}
\end{thm}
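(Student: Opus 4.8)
\emph{Plan.} I would deduce the estimate $B_\Om(x;r)-x\subset 2\la_{\mc H}(r)I_\Om(x)$ from two elementary features of a convex domain. First, for a vector $v$ with $x+\Delta v\subset\Om$ the linear disc $\zeta\mapsto x+\zeta v$ witnesses $K_\Om(x;v)\le 1$, so the \emph{balanced core}
\[
\widehat\Om_x:=\{v\in\C^n:x+\Delta v\subset\Om\}=\bigcap_{\mu}\{v:|\langle v,\mu\rangle|\le h_{\Om-x}(\mu)\}
\]
(the intersection being over unit $\mu$ with finite support value $h_{\Om-x}(\mu)=\sup_{z\in\Om}\re\langle z-x,\mu\rangle$, and the second equality being the support-function description of the convex body $\Om-x$) satisfies $\widehat\Om_x\subset\overline{I_\Om(x)}$. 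Second, $\Om$ lies in each supporting half-space $H_\mu=\{z:\re\langle z-x,\mu\rangle<\delta_\mu\}$, where $\delta_\mu:=h_{\Om-x}(\mu)$.

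The role of Example \ref{ex;hp} is then to evaluate the $d_{H_\mu}$-ball. Completing $\mu$ to a unitary basis and applying the linear change $z\mapsto(\langle z-x,\mu\rangle,\dots)$ identifies $H_\mu$ with the product $\{w\in\C:\re w<\delta_\mu\}\times\C^{n-1}$; consequently $d_{H_\mu}(x,z)$ depends only on $\langle z-x,\mu\rangle$ and equals the Poincar\'e distance of $\langle z-x,\mu\rangle$ to $0$ in the half-plane factor. Since $\{\re w<\delta_\mu\}$ is an affine image of $\mc H$ in which $0$ sits at boundary distance $\delta_\mu$ (so its indicatrix is the disc of radius $2\delta_\mu$), Example \ref{ex;hp} — whose conclusion is affine invariant, cf.\ Remark \ref{r;dom} — gives $B_{\{\re w<\delta_\mu\}}(0;r)\subset\{|w|<2\delta_\mu\la_{\mc H}(r)\}$, hence
\[
B_\Om(x;r)-x\ \subset\ B_{H_\mu}(x;r)-x\ \subset\ \{v:|\langle v,\mu\rangle|<2\delta_\mu\la_{\mc H}(r)\}.
\]
Intersecting these inclusions over all admissible $\mu$ produces exactly $2\la_{\mc H}(r)\,\widehat\Om_x$ on the right, and since $\widehat\Om_x\subset\overline{I_\Om(x)}$ while $B_\Om(x;r)-x$ is open, this yields $B_\Om(x;r)-x\subset 2\la_{\mc H}(r)I_\Om(x)$, as asserted. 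The completeness of $K_\Om$ drops out along the way: hyperbolicity makes each $I_\Om(x)$ bounded, so the estimate bounds every distance ball; and a $d_\Om$-Cauchy sequence accumulating at $q\in\di\Om$ would, on comparison with the supporting half-space at $q$ (whose Poincar\'e distance to the boundary diverges there), have to be $d_\Om$-unbounded — impossible. (Alternatively one may simply cite that a convex hyperbolic domain is complete hyperbolic.)

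The point requiring genuine care is that a single supporting half-space only confines $B_\Om(x;r)-x$ to a complex slab; the estimate emerges only after intersecting over all supporting directions, and it is precisely the match between the half-plane constant $\la_{\mc H}(r)$ of Example \ref{ex;hp} and the indicatrix radius $2\delta_\mu$ of the half-space $H_\mu$ that makes the two constants assemble into $2\la_{\mc H}(r)$ with no loss — so convexity, through the support-function representation, is used decisively, which is consistent with the estimate being affine but not biholomorphic in character. A minor technicality: when $\Om$ is unbounded some $\delta_\mu$ are infinite and the corresponding directions impose no constraint, but the hyperbolicity of $\Om$ (absence of complex lines) keeps $\widehat\Om_x$, and hence every $B_\Om(x;r)$, bounded.
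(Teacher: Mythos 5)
Your proof is correct, and it reaches the paper's exact constant $2\la_{\mc H}(r)$, but it is organized dually to the paper's argument. The paper decomposes $B_\Om(x;r)-x$ along complex lines: for each direction $v$ it takes the nearest boundary point $p$ of the slice $\Om\cap l_{x,v}$, uses the round disc of radius $d=|x-p|$ to get $K_\Om(x;v)\le |v|/d$ (so $I_\Om(x)\cap l_v$ contains the disc of radius $d$), and then projects onto the single supporting half-space of $\Om$ at $p$ to trap $(B_\Om(x;r)-x)\cap l_v$ in the disc of radius $2d\la_{\mc H}(r)$, invoking Example \ref{ex;hp} and the product formula just as you do. You instead intersect over \emph{all} supporting half-spaces $H_\mu$, identify $\bigcap_\mu\{v:|\langle v,\mu\rangle|\le\delta_\mu\}$ with the balanced core $\widehat\Om_x$, and compare $\widehat\Om_x$ with $I_\Om(x)$ globally via linear discs. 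The ingredients (supporting half-spaces, $d_{X\times\C^{n-1}}=d_X\circ\pi$, Example \ref{ex;hp}, linear discs) are identical, but your bookkeeping avoids the paper's coordinate normalization --- in particular the slightly delicate choice of a complex-linear shear that fixes the slice $l_{x,v}$ while straightening the supporting hyperplane at $p$ --- at the cost of a final closure step: you land in $2\la_{\mc H}(r)\,\overline{I_\Om(x)}$ and must use that $B_\Om(x;r)-x$ is open together with the fact that $I_\Om(x)$ is open and convex (so the interior of its closure is itself) to conclude containment in the open set $2\la_{\mc H}(r)I_\Om(x)$; this is fine but deserves a sentence, since $\widehat\Om_x$ itself is not open. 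You also verify completeness, which Definition \ref{d;dom} requires and which the paper's proof of this theorem leaves implicit; your half-space comparison argument there is standard and correct.
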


\begin{proof}
Let $x\in \Om$ be a point and $v\in\C^n$ be arbitrary nonzero vector. We denote by $l_v$ the complex line defined by 
$$l_v = \{\zeta v : \zeta \in \C\}$$
and $l_v + x = \{x+ \zeta v : \zeta \in \C\}$ by $l_{x,v}$. Let $\Om_{x,v} = \Om \cap l_{x,v}$. Since $\Om$ is convex and hyperbolic, $\Om_{x,v}$ is also a convex hyperbolic planar domain. Let $p = p(x,v)$ be a point on $\di\Om_{x,v}$ such that 
$$d:=|x -p| = \mathrm{dist}(x, \di\Om_{x,v})$$ where $\mathrm{dist}$ represents the Euclidean distance. Then since $\Om_{x,v}$ contains the disc centered at $x$ of radius $d$, 
\beg\label{e;up}
K_\Om(x; v) \leq K_{\Om_{x,v}}(x; v) \leq \frac{|v|}{d} = 2 K_{\mc H}(\ii d; v).
\eeg
By a unitary change which map $(x-p)/|x-p|$ to $e_1$ and the translation $z\rw z-p$, we may first assume that 
\begin{itemize}
\item $p$ is the origin,

\item $l_{x,v}$ is the $z_1$-plane,

\item $\Om_{x,v}$ lies on the upper half plane in $z_1$-plane and the real axis is the supporting line of $\Om_{x,v}$,

\item $x = \ii d$.
\end{itemize}
Applying a linear transform which leaves $z_1$-plane invariant, we can also assume that 
\begin{itemize}
\item the supporting hyperplane of $\Om$ at $p=O$ is $\Pi=\{\IM z_1 =0\}$ and as a result $\Om \subset {\mc H}\times \C^{n-1}$.
\end{itemize}
It can be achieved by the linear map 
$$e_1\rw e_1,\,\, v_2\rw e_2,\ldots, v_n\rw e_n,$$
where $v_2,\ldots,v_n$ are linearly independent complex vectors which are tangent to $\Pi\cap \ii \Pi$. Recall that the Kobayashi distance decreases under holomorphic map, and if $p_1, p_2 \in X$ and $q_1, q_2\in Y$ for two complex manifolds $X$ and $Y$, then 
$$d_{X\times Y}((p_1, q_1), (p_2, q_2)) = \max\{d_X(p_1, p_2), d_Y(q_1, q_2)\}.$$  
Therefore, if $y\in \Om_{x, v} \subset \mc H\times\{O\}$, then 
\beg\label{e;dup}
d_\Om (x, y) \geq d_{\mc H\times \C^{n-1}} (x, y) = d_{\mc H}(x,y),
\eeg by identiying $\mc H\times \{O\}$ with $\mc H$. Combining \eqref{e;up} and \eqref{e;dup}, we conclude that 
$$(B_\Om(x; r) - x)\cap l_v \subset 2\la_{\mc H}( I_\Om (x) \cap l_v).$$
The conclusion follows since $x\in \Om$ and $v\in \C^n$ are arbitrary.
\end{proof}

\section{Maximal circularity function}
Let $\Om$ be a bounded domain in $\C^n$. For $x\in \Om$, let
$\mc B_\Om(x)$ be the class of bounded holomorphic embeddings $\vp:\Om\rw \C^n$ such that  $\vp(x) =0$. Let 
\begin{eqnarray}\label{e;mcf}
c_\Om (x) &:=& \sup \left\{\frac{r}{R} : \exists \vp \in \mc B_\Om(x), R>0, r>0 \mbox{ such that }\right.\\
& &\hspace{2cm} \left.r I_{\vp(\Om)}(0) \subset \vp (\Om) \subset R I_{\vp (\Om)}(0) \right\},\nonumber
\end{eqnarray}
where $I_\Om (x)$ represents the Kobayashi indicatrix of $\Om$ at $x\in \Om$. Obiously, $0 < c_\Om (x) \leq 1$, and if $\Om$ is a bounded complete circular domain, then $\Om$ is same with its Kobayashi indicatrix at the origin (\cite{Bar}) and hence  $c_\Om (0) =1$. With ths reason, we call $c_\Om$ the {\em maximal circularity function} or simply the {\em maximal circularity} of $\Om$. The first observation is that the converse is also true.

\begin{prop}\label{p;mc1}
Let $\Om$ be a bounded domain in $\C^n$. Suppose that $c_\Om(x) =1$ at a point $x\in \Om$. Then $\Om$ is biholomorphic to a complete circular domain. 
\end{prop}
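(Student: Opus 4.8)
The plan is to realize the circularity in the limit by a normal families argument. Since $c_\Om(x)=1$, for each $j$ choose $\vp_j\in\mc B_\Om(x)$, $R_j>0$, $r_j>0$ with $r_j/R_j\rw 1$ and
\[
r_j\, I_{\vp_j(\Om)}(0)\subset \vp_j(\Om)\subset R_j\, I_{\vp_j(\Om)}(0).
\]
Replacing $\vp_j$ by $R_j^{-1}\vp_j$ (a biholomorphic embedding still sending $x$ to $0$, and the Kobayashi indicatrix scales linearly), we may assume $R_j=1$, so that with $\Om_j:=\vp_j(\Om)$ we have $s_j\,I_{\Om_j}(0)\subset\Om_j\subset I_{\Om_j}(0)$ where $s_j=r_j\rw 1^-$. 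In particular every $\Om_j$ is contained in the fixed indicatrix-type set; more importantly $\Om_j\subset I_{\Om_j}(0)$ forces a uniform upper bound on $\Om_j$ (the Kobayashi indicatrix of a domain contained in a fixed bounded set is contained in a fixed ball, by the decreasing property of $K$), and $s_jI_{\Om_j}(0)\subset\Om_j$ together with $s_j\to1$ gives a uniform lower bound (a fixed ball inside each $\Om_j$). Hence the $\vp_j$ are uniformly bounded and uniformly bounded below near $x$, so by Montel a subsequence converges locally uniformly to a holomorphic $\vp:\Om\rw\C^n$ with $\vp(x)=0$, and the lower bound shows $\vp$ is nonconstant with nondegenerate differential at $x$.

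The heart of the argument is to upgrade this limit to a biholomorphism onto a complete circular domain. First I would argue that $\vp$ is injective: this is where I expect the main obstacle, since limits of embeddings need not be embeddings in general. The way around it is to exploit the two-sided inclusion, which pins down $\vp(\Om)$ before we even know $\vp$ is injective. Concretely, since $\Om_j\subset I_{\Om_j}(0)$ and $\Om_j$ has a fixed ball around $0$, the indicatrices $I_{\Om_j}(0)$ are balanced convex-like sets trapped between two fixed balls; passing to a further subsequence one can make $I_{\Om_j}(0)$ converge (in the local Hausdorff/kernel sense) to a balanced domain $D$, and the squeeze $s_j I_{\Om_j}(0)\subset\Om_j\subset I_{\Om_j}(0)$ with $s_j\rw1$ forces $\vp(\Om)$ (the kernel limit of $\Om_j$, which equals $\Om_j$ up to the squeeze) to coincide with $D$, a complete circular domain. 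Once $\vp(\Om)=D$ is open and $\vp$ is a nonconstant holomorphic map $\Om\rw D$ with $d\vp(x)$ invertible, injectivity follows from the stability of the degree / the fact that a locally uniform limit of biholomorphisms onto a fixed-volume target with nondegenerate derivative is a biholomorphism (Hurwitz-type argument in several variables, e.g. using that $\vp_j:\Om\rw\Om_j$ biholomorphic and $\mathrm{Vol}(\Om_j)\rw\mathrm{Vol}(D)$ bounded away from $0$ and $\infty$ forces $\det d\vp$ to be nonvanishing, hence $\vp$ a covering onto $D$, which is simply connected being star-shaped, so $\vp$ is biholomorphic).

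Finally, assembling the pieces: $\vp:\Om\rw D$ is a biholomorphism and $D$ is a bounded complete circular domain, so $\Om$ is biholomorphic to a complete circular domain, as claimed. I would organize the write-up as: (i) normalization $R_j=1$ and the two uniform ball bounds; (ii) extraction of the limits $\vp$ and $D=\lim I_{\Om_j}(0)$ and the identification $\vp(\Om)=D$ via the squeeze; (iii) the Hurwitz/covering argument that $\vp$ is a biholomorphism; (iv) conclusion. The only genuinely delicate point is (iii) together with verifying the kernel-convergence claim in (ii) carefully, namely that the one-sided inclusions really do force the limit domain to be exactly the balanced set $D$ rather than something smaller.
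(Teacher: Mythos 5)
There is a genuine gap at step (i), and it propagates through the rest of the argument. Normalizing so that $R_j=1$ does not give you the claimed uniform upper and lower ball bounds on $\Om_j=\vp_j(\Om)$: the two-sided inclusion $s_j I_{\Om_j}(0)\subset \Om_j\subset I_{\Om_j}(0)$ is invariant under composing $\vp_j$ with an arbitrary invertible linear map (both $\Om_j$ and its indicatrix transform by the same linear map), so it fixes no scale and no shape. Concretely, take $\Om$ the unit ball and $\vp_j(z)=(jz_1,\, z_2/j,\, z_3,\ldots,z_n)$: here $I_{\Om_j}(0)=\Om_j$, so $r_j=R_j=1$ already, yet the $\Om_j$ contain no fixed ball, are contained in no fixed ball, and $\vp_j$ has no nondegenerate limit. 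Your sentence ``the Kobayashi indicatrix of a domain contained in a fixed bounded set is contained in a fixed ball'' is true but circular here, since the fixed bounded set containing $\Om_j$ is exactly what is not yet established. Consequently Montel does not apply to your $\vp_j$, the kernel convergence of $I_{\Om_j}(0)$ is not available, and steps (ii)--(iii) have nothing to work with.

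The missing idea is to normalize the \emph{derivative} rather than the outer radius: set $\psi_j=[d\vp_j(x)]^{-1}\circ\vp_j$. Since $I_{\vp_j(\Om)}(0)=d\vp_j(x)\bigl(I_\Om(x)\bigr)$ by invariance of the Kobayashi metric, the squeeze becomes $r_j I_\Om(x)\subset\psi_j(\Om)\subset R_j I_\Om(x)$ with the \emph{fixed} bounded set $I_\Om(x)$ as model, and $d\psi_j(x)=Id$. Boundedness of $R_j$ then follows from hyperbolicity of $\Om$ (if $R_j\rw\infty$ then $r_j\rw\infty$ and $K_\Om(x;v)=K_{\psi_j(\Om)}(0;v)\leq |v|/(r_j\epsilon)\rw 0$, a contradiction), Montel applies, and after passing to a subsequence with $r_j,R_j\rw R_0$ the limit $\psi$ satisfies $\psi(\Om)=R_0 I_\Om(x)$ with no Hausdorff/kernel limit of varying balanced sets needed. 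This normalization also dissolves the injectivity worry you flag in (iii): since $d\psi(x)=Id$, the limit of the injective maps $\psi_j$ is nondegenerate and hence injective by the standard Hurwitz-type theorem, so the heavier covering-map argument is unnecessary. With that replacement your outline becomes essentially the paper's proof.
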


\begin{proof}
Suppose $\vp_j \in \mc B_\Om(x)$ such that 
\beg\label{e;p1} r_j I_j \subset \vp_j (\Om) \subset R_j I_j\eeg for some $R_j \geq r_j >0$ and $r_j/R_j \rw 1 $ as $j\rw \infty$, where 
$$ I_j := I_{\vp_j(\Om)} (0) = d\vp_j (x) (I_\Om (x)).$$
Let
$$\psi_j = [d\vp_j(x)]^{-1} \circ \vp_j.$$ Then 
$\psi_j \in \mc B_\Om(x)$, and acting $[d\vp_j(x)]^{-1}$ to \eqref{e;p1}, we have
$$r_j I_\Om (x) \subset \psi_j (\Om) \subset R_j I_\Om(x).$$
Notice that $R_j <R$ for some $R>0$. Otherwise, we may assume $R_j\rw \infty$ as $j\rw\infty$. Then $r_j\rw \infty$ as well. Let $\epsilon >0$ be chosen small enough that $B_\epsilon \subset I_\Om (x)$, where $B_\epsilon$ is the Euclidean $\epsilon$-ball centered at the origin. Let $v \in \C^n$ be a nonzero vector. Then since $d\psi_j (x) = Id$ and $\Om$ is hyperbolic, 
$$0< K_\Om (x; v) = K_{\psi_j (\Om)} (0; v) \leq K_{r_j B_\epsilon} (0; v) = \frac{|v|}{r_j\epsilon} \rw 0$$
as $j\rw \infty$. This is a contradiction. Choosing a subsequence, we may assume that $\psi_j$ is convergent by Montel's theorem and $r_j \rw R_0$, $R_j\rw R_0$ for some $R_0 \geq 0$. Since $d \psi_j (x) = Id$, the limit $\psi$ of the sequence $\psi_j$ is also a 1-1 holomorphic mapping and $\psi (\Om) = R_0 I_\Om (x)$. Therefore, $R_0 >0$ and $\psi : \Om \rw R_0 I_\Om (x)$ is a biholomorphic mapping.
\end{proof}

Notice that the defintion of $c_\Om$ is similar to that of the squeezing function defined in \cite{DGZ1} (cf. \cite{DGZ2, JK, KZ}) which is motivated by \cite{LSY1, LSY2, Yeu}. The defintion of the squeezing function can be generalized to the case of more arbitrary circular models as follows (see also \cite{GP}). Let $D$ be a bounded complete circular domain in $\C^n$. Then the {\em squeezing function} $s^D_\Om (x)$ of $\Om$ at $x\in \Om$ modelled on $D$ is defined by 
\begin{eqnarray}\label{e;sqf}
s_\Om^D (x) &:=& \sup \left\{\frac{r}{R} : \exists \vp \in \mc B_\Om(x), R>0, r>0 \mbox{ such that }\right.\\
& &\hspace{2cm} \left.r D \subset \vp (\Om) \subset R D \right\}.\nonumber
\end{eqnarray}

Thanks to the similarity of definitions \eqref{e;mcf} and \eqref{e;sqf}, the following natural comparison between $c_\Om$ and $s^D_\Om$ can be verified. 

\begin{prop}\label{p;mcfsqf}
Let $\Om$ be a bounded domain in $\C^n$ and $x\in \Om$. Then for any bounded complete circular domain $D\subset \C^n$, 
$$c_\Om (x) \geq \left(s^D_\Om (x) \right)^2.$$
\end{prop}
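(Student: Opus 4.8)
The plan is to show that any biholomorphic embedding $\vp \in \mc B_\Om(x)$ realizing (approximately) the squeezing value $s^D_\Om(x)$ with respect to a fixed bounded complete circular model $D$ can be post-composed by a linear map so as to produce an embedding that sandwiches $\vp(\Om)$ between two dilates of its own Kobayashi indicatrix, with ratio controlled by $\left(s^D_\Om(x)\right)^2$. The key geometric input is monotonicity of the Kobayashi indicatrix under inclusion of domains together with its equivariance under linear maps: if $r D \subset \vp(\Om) \subset R D$, then applying the biholomorphism-invariance and inclusion-monotonicity of the Kobayashi–Royden metric gives $r I_D(0) \subset I_{\vp(\Om)}(0) \subset R I_D(0)$, hence in particular $\vp(\Om)$ is squeezed between $\frac{r}{R} I_{\vp(\Om)}(0)$ (scaled appropriately) and a correspondingly larger dilate.

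More concretely, first I would fix $\ve>0$ and pick $\vp \in \mc B_\Om(x)$, $R \ge r > 0$ with $rD \subset \vp(\Om) \subset RD$ and $r/R > s^D_\Om(x) - \ve$. Since $D$ is a bounded complete circular domain, $D = I_D(0)$ by the Barth-type identification already cited in the paragraph preceding Proposition \ref{p;mc1}. Next, from $rD \subset \vp(\Om) \subset RD$ and the monotonicity $K_{rD} \ge K_{\vp(\Om)} \ge K_{RD}$, taking indicatrices (sublevel sets of these metrics at $0$) yields
$$
r I_D(0) \subset I_{\vp(\Om)}(0) \subset R I_D(0),
$$
using $I_{rD}(0) = r I_D(0) = rD$ and likewise for $R$. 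Combining the two chains $rD \subset \vp(\Om) \subset RD$ and $rD \subset I_{\vp(\Om)}(0) \subset RD$: the second chain gives $\frac{1}{R} I_{\vp(\Om)}(0) \subset D \subset \frac{1}{r} I_{\vp(\Om)}(0)$, and substituting into the first gives
$$
\frac{r}{R}\, I_{\vp(\Om)}(0) \subset rD \subset \vp(\Om) \subset RD \subset \frac{R}{r}\, I_{\vp(\Om)}(0).
$$
This exhibits $\vp$ itself as a competitor in the definition \eqref{e;mcf} with small radius $\tilde r = r/R$ and large radius $\tilde R = R/r$, so $c_\Om(x) \ge \tilde r / \tilde R = (r/R)^2 > (s^D_\Om(x) - \ve)^2$. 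Letting $\ve \to 0$ finishes the argument.

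The one point that requires a little care — and which I expect to be the main (mild) obstacle — is justifying that the Kobayashi indicatrix behaves as asserted under the dilation and inclusion relations, i.e. that $I_{rD}(0) = rD$ when $D$ is complete circular, and that sublevel sets of $K$ respect the domain inclusions without boundary/closure pathologies. The identity $I_{rD}(0) = r I_D(0)$ follows from the homogeneity $K_{rD}(0; v) = \tfrac1r K_D(0; v/1)$ under the scaling $z \mapsto rz$, and $I_D(0) = D$ for bounded complete circular $D$ is exactly the Barth result invoked earlier; the inclusion monotonicity of indicatrices is immediate from $K$ being decreasing under inclusions of domains. I would state these as two short lemmas or simply cite the relevant facts from \cite{JP, Ko}, then assemble the display chains above. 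No heavy computation is needed beyond this bookkeeping.
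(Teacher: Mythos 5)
Your proof is correct and follows essentially the same route as the paper's: both arguments rest on Barth's theorem $I_D(0)=D$, the inclusion-monotonicity of the Kobayashi indicatrix, and the resulting sandwich $\frac{r}{R}\,I_{\vp(\Om)}(0)\subset\vp(\Om)\subset\frac{R}{r}\,I_{\vp(\Om)}(0)$, giving $c_\Om(x)\geq (r/R)^2$. The only cosmetic difference is that the paper normalizes $R=1$ before running the same chain of inclusions and takes a supremum in place of your $\ve$-argument.
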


\begin{proof}
Let $\vp\in \mc B_\Om (x)$ such that $\vp(\Om) \subset D.$ and $r>0$ be chosen that $rD \subset \vp(\Om).$ Since $D$ and $rD$ are completely circular, Barth's theorem (\cite{Bar}) implies that 
$$I_D(0) = D,\quad I_{rD}(0) = rD.$$
Then we have 
$$rD = I_{rD}(0) \subset I_{\vp(\Om)}(0) \subset I_D(0) = D$$
from the decreasing property of the Kobayashi metric. Therefore,
$$r I_{\vp(\Om)}(0)\subset rD\subset\vp(\Om) \subset D \subset \frac{1}{r}  I_{\vp(\Om)}(0).$$
This implies $c_\Om (x) \geq r^2.$ Taking the supremum for $r$, we get the conclusion.
\end{proof}

If $\Om$ is strongly pseudoconvex, then it turned out that the squeezing function $s_\Om(x) = s^{B^n}_\Om(x)$ modelled on the unit ball tends to $1$ as $x$ goes to a boundary point (\cite{DFW, KZ}). Therefore, we have immediately the following theorem as a corollary.

\begin{thm}\label{t;spc}
If $\Om$ is a bounded domain in $\C^n$ with $C^2$-smooth strongly pseudoconvex boundary, then $c_\Om (x)\rw 1$ as $x\rw p\in \di \Om$.
\end{thm}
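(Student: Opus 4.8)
The plan is to derive this directly from Proposition \ref{p;mcfsqf} together with the known boundary behavior of the squeezing function on strongly pseudoconvex domains. First I would recall from \cite{DFW, KZ} that if $\Om$ is a bounded domain with $C^2$-smooth strongly pseudoconvex boundary, then $s_\Om(x) := s^{B^n}_\Om(x) \rw 1$ as $x \rw p \in \di\Om$, where $B^n$ is the Euclidean unit ball, which is a bounded complete circular domain. Then I would apply Proposition \ref{p;mcfsqf} with $D = B^n$ to obtain the pointwise inequality
$$c_\Om(x) \geq \left(s_\Om(x)\right)^2$$
for every $x \in \Om$.

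Next I would combine this with the trivial upper bound $c_\Om(x) \leq 1$, which holds for every $x$ since $c_\Om$ is by definition a supremum of ratios $r/R$ with $r \leq R$ (this is noted right after \eqref{e;mcf}). This sandwiches $c_\Om(x)$ between $\left(s_\Om(x)\right)^2$ and $1$. Taking the limit as $x \rw p \in \di\Om$ and using $s_\Om(x) \rw 1$, the squeeze theorem gives $c_\Om(x) \rw 1$, which is the claim.

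There is essentially no obstacle here: the theorem is explicitly flagged in the excerpt as an immediate corollary, and the only nontrivial input is the cited deep result on the squeezing function, which we are entitled to assume. The one point worth stating carefully is that Proposition \ref{p;mcfsqf} applies to \emph{any} fixed bounded complete circular domain $D$, so taking $D = B^n$ is legitimate, and that the convergence $s_\Om(x) \rw 1$ from \cite{DFW, KZ} is uniform-free in the sense that it is a genuine limit along $x \rw p$, so no extra care about the mode of convergence is needed. Thus the proof is a two-line deduction: $\left(s_\Om(x)\right)^2 \leq c_\Om(x) \leq 1$ and let $x \rw p$.
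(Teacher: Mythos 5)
Your proposal is correct and is exactly the paper's argument: the theorem is stated as an immediate corollary of Proposition \ref{p;mcfsqf} applied with $D=B^n$ together with the boundary limit $s_\Om(x)\rw 1$ from \cite{DFW, KZ}. Nothing further is needed.
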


Another example of domains is {\em polyhedral domains}. Recall that a bounded domain $\Om$ in $\C^n$ is called a polyhedral domain if there are smooth real-valued functions $\rho_1,\ldots, \rho_N$ defined on a neighborhood of $\ov\Om$ such that 
\begin{itemize}
\item $\Om = \{z\in\C^n : \rho_1(z) <0,\ldots, \rho_N(z)<0\}$,

\item on $\di_\al\Om := \{z\in\di\Om : \rho_\al(z)=0\}$ for $\al=1,\ldots,N$, $\nabla \rho_\al$ never vanishes,

\item each $\di_\al \Om $ is Levi flat.
\end{itemize}

$\Om$ is said to be {\em normal} if it satisfies in addition

\begin{itemize}
\item on each $\di_{\al_1\cdots \al_k}\Om :=\{z\in \di \Om: \rho_{\al_1}(z) = \cdots= \rho_{\al_k}(z) =0\}$, $\nabla \rho_{\al_1}$,..., $\nabla \rho_{\al_k}$ are linearly independent over $\C$.
\end{itemize}

Imitating the idea of \cite{Kim1, Kim2} about the scaling of convex normal polyhedral domains, we can prove the following.

\begin{thm}\label{t;poly}
Let $\Om = \{z\in\C^n : \rho_1(z) <0,\ldots, \rho_N(z)<0\}$ be a convex normal polyhedral domain in $\C^n$. Let
$q\in \di_{\al_1\cdots \al_n} \Om$ for some distinct indices $\al_1,\ldots,\al_n.$ Then $c_\Om (x) \rw 1$ as $x\rw q$. 
\end{thm}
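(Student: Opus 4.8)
The plan is to reduce the statement to a local model computation at $q$ and then to run a scaling argument in the spirit of \cite{Kim1, Kim2}. By Proposition \ref{p;mcfsqf} one has $c_\Om(x) \ge \bigl(s^{\Delta^n}_\Om(x)\bigr)^2$, so together with the trivial bound $c_\Om \le 1$ it suffices to prove that $s^{\Delta^n}_\Om(x) \to 1$ as $x \to q$. Concretely, I want to show: for every sequence $x_j \to q$ in $\Om$ and every $\ve > 0$ there are $\vp_j \in \mc B_\Om(x_j)$ with $(1-\ve)\Delta^n \subset \vp_j(\Om) \subset \Delta^n$ for all large $j$; here $\Delta^n$ is a bounded complete circular domain with $I_{\Delta^n}(0) = \Delta^n$ by \cite{Bar}, and then $c_\Om(x_j) \to 1$ follows.

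The first step is to understand the geometry at $q$. Because of the normality hypothesis, i.e.\ the $\C$-linear independence of $\nabla\rho_{\al_1},\dots,\nabla\rho_{\al_n}$ at $q$, a complex affine change of coordinates puts $q = 0$ and makes the (real) supporting hyperplane of $\Om$ at $q$ coming from the smooth face $\di_{\al_i}\Om$ equal to $\{\re z_i = 0\}$ for $i = 1,\dots,n$; in particular $\Om \subset \HG^n := \{z \in \C^n : \re z_i < 0,\ i = 1,\dots,n\}$. The structural point is that, in these coordinates, each face $\di_{\al_i}\Om$ is near $q$ a \emph{cylinder} $\Gamma_i \times \C^{n-1}$ over a convex arc $\Gamma_i$ through $0$ in the $z_i$-plane, tangent there to the imaginary axis. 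Indeed, Levi-flatness says $\di_{\al_i}\Om$ is foliated by complex hypersurfaces; each leaf lies in $\di\Om$, so by the maximum principle applied to a $\C$-linear supporting functional of the convex domain $\Om$ restricted to the leaf, every leaf is contained in an affine complex hyperplane; the complex normal of that hyperplane is constant along the leaf (Levi-flatness again), and since the leaf lies in $\overline\Om \subset \{\re z_i \le 0\}$ that normal must be parallel to $e_i$, so the leaves near $q$ are the parallel hyperplanes $\{z_i = \mathrm{const}\}$. Hence $\Om \cap U = \prod_{i=1}^n (S_i \cap U_i)$ for a product neighborhood $U = \prod U_i$ of $q$, where each $S_i$ is a bounded convex planar domain with $0 \in \di S_i$; by the Riemann mapping theorem $S_i \cap U_i \cong \Delta$, so $\Om \cap U \cong \Delta^n$ and $c_{\Om\cap U} \equiv 1$ (as $c$ is a biholomorphic invariant and $\Delta^n$ is homogeneous).

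Given $x_j \to q$, I would then construct $\vp_j$ by rescaling each of the $n$ planar factors of $\Om \cap U$ about $x_{j,i}$ — by a complex affine dilation, with factor governed by $\mathrm{dist}(x_{j,i}, \di S_i)$, when $x_j$ approaches $q$ non-tangentially, and, for tangential approaches, by first applying the Riemann uniformization of the convex arc $\Gamma_i$ to straighten it and only then dilating — followed by a fixed Cayley-type biholomorphism $\HG^n \to \Delta^n$ and an automorphism of $\Delta^n$ carrying the image of $x_j$ to $0$. Because $\Om$ is bounded and convex, and the Cayley step sends $\HG^n$ onto $\Delta^n$ while the rescalings can be arranged to respect $\HG^n$, the $\vp_j$ are bounded embeddings with $\vp_j(x_j) = 0$ and $\vp_j(\Om) \subset \Delta^n$; and because the part of $\Om$ outside $U$ escapes to infinity under the dilations while $\Om \cap U$ is an exact product, $\vp_j(\Om)$ converges to $\Delta^n$ in the sense of kernels — the inner part coming directly from one-variable scaling of each convex planar factor $S_i$ to a half-plane, the outer part from a direct estimate using convexity of $\Om$. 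Granting this, $(1-\ve)\Delta^n \subset \vp_j(\Om) \subset \Delta^n$ for $j$ large, hence $s^{\Delta^n}_\Om(x_j) \to 1$ and, by Proposition \ref{p;mcfsqf}, $c_\Om(x_j) \to 1$; since the sequence was arbitrary, $c_\Om(x) \to 1$ as $x \to q$.

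The main obstacle is precisely this last scaling and convergence step — in particular, the \emph{outer} estimate, that $\vp_j(\Om)$ fills out $\Delta^n$ in the limit rather than converging to a proper (shifted or collapsed) sub-polydisc. This is where convexity of $\Om$ (equivalently of the $S_i$), the tangency of $\Gamma_i$ to the imaginary axis, and the correct choice of dilation factors must be combined; and it is here that tangential approaches $x_j \to q$ genuinely force the use of the non-affine uniformizers of the planar factors (an affine rescaling of $\Om$ inevitably converges to an \emph{unbounded} half-plane product, for which the "squeezed between two copies of the indicatrix" condition degenerates, and a rescaling that respects $\HG^n$ can converge to a strictly smaller polydisc). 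Carrying the construction out uniformly in $j$ while keeping each $\vp_j$ globally defined on $\Om$ — rather than only near $q$ — is exactly the localized scaling mechanism of \cite{Kim1, Kim2} that the argument imitates, and is the technical heart of the proof.
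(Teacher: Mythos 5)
Your reduction to $s^{\Delta^n}_\Om(x)\rw 1$ via Proposition \ref{p;mcfsqf} is exactly the paper's first step, but after that there are two genuine problems. First, the structural claim that $\Om$ is locally an exact product $\prod_i(S_i\cap U_i)$ near $q$ is not justified. The maximum principle (or convexity) does show that each leaf of the Levi-flat face $\di_{\al_i}\Om$ is an open piece of a complex affine hyperplane, but your next step --- ``since the leaf lies in $\ov\Om\subset\{\re z_i\le 0\}$ the normal must be parallel to $e_i$'' --- only works for an \emph{entire} complex hyperplane (where an affine function bounded above is constant); a bounded piece of a tilted hyperplane sits inside $\{\re z_i\le 0\}$ with no difficulty. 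Only the leaf through $q$ itself is forced into $\{z_i=0\}$; nearby leaves of the same face may rotate, and the domain need not be a local product in any affine coordinates. The paper's proof is built precisely to accommodate this: for each $x$ near $q$ it selects the tangent hyperplanes $h_\al(x)$ of $\di_\al\Om$ closest to $x$, their intersection point $p(x)$ and normals $v_\al(x)$, and normalizes by an $x$-dependent affine map $A_x\rw\mathrm{Id}$, rather than assuming a fixed product structure.

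Second, even granting a product structure, the step you yourself identify as ``the technical heart'' --- showing that the images $\vp_j(\Om)$ fill out $\Delta^n$ rather than a proper sub-polydisc, especially for tangential approach --- is exactly the part that is missing, so the proposal is a plan rather than a proof. It is worth seeing how the paper avoids this difficulty entirely: after mapping $\Om$ affinely into $\mc H^n$ and applying the Cayley transform $\Phi:\mc H^n\rw\Delta^n$, one only needs a \emph{uniform inner} estimate, namely a fixed product of horodiscs $\Delta_\ve^n\subset\Phi(A_x(\Om))\subset\Delta^n$ internally tangent at $(1,\ldots,1)$ (this is where normality and smoothness of the faces enter). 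Then the automorphism $\Psi_x$ of $\Delta^n$ fixing the tangency point $(1,\ldots,1)$ and sending $\Phi(A_x(x))$ to the origin expands the horodisc product so as to exhaust $\Delta^n$ as $x\rw q$, while the outer containment in $\Delta^n$ is automatic. No kernel convergence of the rescaled domains, no case distinction between tangential and non-tangential approach, and no Riemann uniformization of the planar factors is needed. I would encourage you to replace the product-structure reduction and the dilation scheme by this horodisc-plus-polydisc-automorphism argument.
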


\begin{proof}
Reordering indices, we may assume that $\{\al_1,\ldots,\al_n\} = \{1,\ldots,n\}$ without loss of generality. We can also assume that $|\nabla \rho_\al| \equiv 1$ on $\di_j\Om$ for each $\al=1,\ldots,N$. By Proposition \ref{p;mcfsqf}, we only need to prove that 
$$\sigma(x) := s^{\Delta^n}_\Om (x) \rw 1 \quad \mbox{as $x\rw q$},$$ where $\Delta^n$ represents the unit polydisc in $\C^n$. 
Notice that $\di_\al \Om$ is foliated by analytic varieties since it is Levi flat, and each of them is indeed a part of affine hyperplane because $\Om$ is convex. Therefore, there are $n$ hyperplanes $H_1,\ldots,H_n$ such that 
\begin{itemize}
\item $H_1\cap\cdots \cap H_n =\{q\}$,

\item $V_\al:=H_\al\cap \di\Om$ is the maximal subvariety in $\di_j\Om$ containing $q$.
\end{itemize}
Taking a linear affine change of coordinates, we assume that 
\begin{itemize}
\item $q= O = (0,\ldots,0)$ the origin,

\item $H_\al = \{z_\al =0\}$,

\item $\{\re z_\al =0\}$ is a supporting (real) hyperplanes of $\Om$ for $\al=1,\ldots,n$, 

\item $\Om \subset \{ \re z_1<0,\ldots \re z_n <0\} = \mc H^n$,
\end{itemize}
where $\mc H = \{\zeta \in \C : \re \zeta <0\}$, the left half plane in $\C$. Let us suppose that $x\in \Om$ is sufficiently close to $O$. Then among all affine hyperplanes which are tangent to $\di_\al\Om$, there is a unique $h_\al = h_\al(x)$ which is closest to $x$, $\al=1,\ldots,n$. Let $p = p(x) \in \di_{12\cdots n} \Om$ be the intersection point of those hyperplanes, that is, 
$$h_1 \cap\cdots\cap h_n = \{p\}.$$ Let 
$$v_\al =v_\al(x) =  \nabla \rho_\al (p),$$ and let $A_x$ be the affine transform defined by 
\begin{itemize}
\item $A_x(p) =0$,

\item $d A_x (v_\al) = e_\al$ for $\al=1,\ldots,n$.
\end{itemize}
It is obvious that $A_x \rw Id$ locally uniformly on $\C^n$ as $x\rw O$, since $p\rw O$ and $v_\al \rw e_\al$ as $x\rw O$ by the construction. Notice also that $A_x(\Om)\subset \mc H^n$ and $A_x(x) = (x_1,\ldots,x_n)$ where $x_\al<0$ and $x_\al\rw 0$ as $x\rw O$ for all $\al=1,\ldots,n$. Let $\Phi:\mc H^n \rw \Delta^n$ be the Cayley transform such that $\Phi(O) = (1,\ldots,1)$ and $\Phi(-1,\ldots,-1) = O = (0,\ldots, 0)$. Then $\Phi (A_x(x)) = (1-r_1,\ldots,1-r_n)$ for some $r_\al>0$ which tends to $0$ as $x\rw O$. The normality condition implies that $\di_{1\cdots n}\Om$ is a smooth real $n$-dimensional manifold and for each $\al=1,\ldots, n$, $\pi_\al (\Phi (A_x(\Om)))$ has a smooth boundary near $1$, where $\pi_\al$ denotes the natural projection $\C^n\ni (z_1,\ldots,z_n)\rw z_\al \in\C$. Therefore, there is $\epsilon>0$ such that $\Delta_\epsilon^n \subset \Phi (A_x(\Om))$ for all $x\in \Om$ which is sufficiently close to $O$, where $\Delta_\epsilon$ represents the disc in $\C$ centered at $1-\epsilon$ of radius $\epsilon$. Let $\Psi_x \in \Aut (\Delta^n)$ such that 
$$\Psi_x (1,\ldots, 1)=(1,\ldots,1) \quad\mbox{and}\quad \Psi_x (1-r_1,\ldots,1-r_n) = (0,\ldots,0)$$. 
For each $0<r<1$, then 
$$r\Delta^n \subset \Psi_x (\Delta_\epsilon^n) \subset \Psi_x \circ \Phi\circ A_x (\Om)$$ for any $x\in \Om$ which is suficiently close to $O$. Therefore,
$$\liminf_{\Om\ni x\rw O} s^{\Delta^n}_\Om (x) \geq r$$ for each $0<r<1$, and this yields the conclusion.
\end{proof}


\begin{thebibliography}{99}

\bibitem{Bar}
{\sc T. J. Barth}, {\em The Kobayashi Indicatrix at the Center of a Circular Domain,} Proc.  Amer. Math. Soc.,
Vol. 88, No. 3 (1983), 527-530.

\bibitem{DGZ1}
{\sc F. Deng, Q. Guan and L. Zhang}, {\em On Some Properties of Squeezing Functions of Bounded Domains}, Pacific J. Math. 257(2) (2012), 319--341.

\bibitem{DGZ2}
{\sc F. Deng, Q. Guan and L. Zhang}, {\em Properties of squeezing functions and global transformations of bounded domains}, Trans. Amer. Math. Soc. 368 (2016), 2679–2696.

\bibitem{DFW}
{\sc K. Diederich, J. E. Fornaess and E. F. Wold}, {\em Exposing Points on the Boundary of a Strictly Pseudoconvex or a Locally Convexifiable Domain of Finite 1-Type}. Jour. Geom. Anal.. 24(4) (2014),  2124-–2134.

\bibitem{Fr}
{\sc S. Frankel}, {\em Complex geometry of convex domains that cover varieties}, Acta Math. 163 (1987), 109--149. 

\bibitem{GP}
{\sc N. Gupta and S. K. Pant}, {\em A note on squeezing function and its generalizations}, arXiv:2211.14971.

\bibitem{JP}
{\sc M. Jarnicki and P. Pflug}, {\em Invariant Distances and Metrics in Complex Analysis}, Walter de Gruyter Expositions in Mathematics, 9, Walter de Gruyter \& Co., Berlin, 1993.

\bibitem{SJoo}
{\sc S. Joo}, {\em On the scaling methods by Pinchuk and Frankel}, Jour. Math. Anal, Appl., 454 (1) (2017), 181--194


\bibitem{JK}
{\sc S. Joo and K.-T. Kim}, {\em On boundary points at which the squeezing function tends to one}, J. Geom. Anal. (2016), 2456-2465.

\bibitem{Kim1}
{\sc K.-T. Kim}, {\em Domains in $\C^n$ with a piecewise Levi flat boundary which possess a noncompact automorphism group}, Math. Ann. 292 (1992), 575--586.

\bibitem{Kim2}
{\sc K.-T. Kim}, {\em Analytic polyhedra with non-compact automorphism group}, Advanced Studies in Pure Mathematics 42 (2004), Complex Analysis in Several Variables, 135--140.

\bibitem{KK}
{\sc K.-T. Kim and S. G. Krantz}, {\em Complex scaling and domains with non-compact automorphism group}, Illinois J. Math. 45(4) (2001), 1273--1299.

\bibitem{KKS}
{\sc K.-T. Kim, S.G. Krantz and A. F. Spiro}, {\em Analytic polyhedra in $\C^2$ with a non-compact automorphism group}, J. Reine Angw. Math. 579 (2005), 1--12.

\bibitem{KP}
{\sc K.-T. Kim and A. Pagano}, {\em Normal analytic polyhedra in $\C^2$ with a non-compact automorphism group}, J. Geom. Anal., 11 (2001), 283--293.

\bibitem{KZ}
{\sc K.-T. Kim and L. Zhang}, {\em On the uniform squeezing property of bounded convex domains in $\C^n$}, Pacific J. Math. 282(2) (2016), 341–-358.

\bibitem{Ko}
{\sc S. Kobayashi}, {\em Hyperbolic complex spaces}, Grundlehren der mathematischen Wissenschaften [Fundamental Principles of Mathematical Sciences], 318. Springer-Verlag, Berlin, 1998. 

\bibitem{LSY1}
{\sc K. Liu, X. Sun and S. T. Yau}, {\em Canonical metrics on the moduli space of Riemann surfaces I}, J. Differ. Geom. 68(3) (2004), 571-–637.

\bibitem{LSY2}
{\sc K. Liu, X. Sun and S. T. Yau}, {\em Canonical metrics on the moduli space of Riemann surfaces II}, J. Differ. Geom. 69(1) (2005), 163–-216.

\bibitem{Pin}
{\sc S. Pinchuk}, {\em The scaling method and holomorphic mappings}, Several Complex Variables and Complex Geometry, Part I (Santa Cruz, CA, 1989), Proc. Symp. Pure Math. 52, Part I, Amer. Math. Soc., 1991, 151--161.

\bibitem{Yeu}
{\sc S.-K. Yeung}, {\em Geometry of domains with the uniform squeezing property}, Adv. Math. 221(2) (2009), 547--569.

\bibitem{Zim}
{\sc A. Zimmer}, {\em Generic Analytic Polyhedron with a Non-Compact Automorphism Group}, Indiana Univ. Math. J., 67(3) (2018), 1299--1326.






\end{thebibliography}
\end{document}